\newtheorem{theorem}{Theorem}[section]
\newtheorem{corollary}[theorem]{Corollary}
\newtheorem{problem}[theorem]{Problem}
\newtheorem{proposition}[theorem]{Proposition}
\def\J#1#2#3{ \left\{ #1,#2,#3 \right\} }
\def\11{\textbf{$1$}}
\def\CC{{\mathbb{C}}}
\def\TT{{\mathbb{T}}}
\begin{document}

\title[The linear biholomorphic property for von Neumann algebra preduals]{von Neumann algebra preduals satisfy the linear biholomorphic property}
\date{}

\author[A.M. Peralta]{Antonio M. Peralta}
\email{aperalta@ugr.es}
\address{Departamento de An{\'a}lisis Matem{\'a}tico, Facultad de
Ciencias, Universidad de Granada, 18071 Granada, Spain.}

\author[L.L. Stachó]{Laszlo L. Stachó}
\email{stacho@math.u-szeged.hu}
\address{Bolyai Institute, University of Szeged, 6720 Szeged, HUNGARY}

\thanks{First author partially supported by the Spanish Ministry of Economy and Competitiveness,
D.G.I. project no. MTM2011-23843, and Junta de Andaluc\'{\i}a grant FQM3737.}

\subjclass[2000]{Primary 32N05, 32N15, 17C50; Secondary 17C65, 47L70}

\keywords{}

\date{August, 2013}

\maketitle
 \thispagestyle{empty}

\begin{abstract} We prove that for every JBW$^*$-triple $E$ of rank $>1$, the symmetric part of its predual reduces to zero. Consequently, the predual of every infinite dimensional von Neumann algebra $A$ satisfies the linear biholomorphic property, that is, the symmetric part of $A_*$ is zero. This solves a problem posed by M. Neal and B. Russo \cite[to appear in \emph{Mathematica Scandinavica}]{NealRu12}.
\end{abstract}

\section{Introduction}

The open unit ball of every complex Banach space satisfies certain holomorphic properties which determine the global isometric structure of the whole space. An illustrative example is the following result of W. Kaup and H. Upmeier \cite{KaUp77}.

\begin{theorem}\cite{KaUp77}\label{t Kaup Upmeier 77}
Two complex Banach spaces whose open unit balls are biholomorphically equivalent are linearly isometric.$\hfill\Box$
\end{theorem}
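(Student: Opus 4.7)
The plan is to reduce the problem to the case where the biholomorphism fixes the origin, and then to invoke classical Schwarz and Cartan rigidity. Write $F:B_X\to B_Y$ for the given biholomorphism.

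In the base case $F(0)=0$, I would apply the Schwarz lemma for holomorphic maps between unit balls of complex Banach spaces (Harris's Banach-space version) to conclude $\|F'(0)v\|_Y\le\|v\|_X$ for every $v\in X$. Applied to $F^{-1}$, it gives $\|(F^{-1})'(0)w\|_X\le\|w\|_Y$ for every $w\in Y$, and since $(F^{-1})'(0)=F'(0)^{-1}$, the derivative $T:=F'(0):X\to Y$ is a surjective linear isometry. Then $T^{-1}\circ F:B_X\to B_X$ is a holomorphic self-map of $B_X$ fixing the origin with derivative equal to the identity, so Cartan's uniqueness theorem for bounded domains in Banach spaces forces $T^{-1}\circ F=\mathrm{id}_{B_X}$; hence $F=T|_{B_X}$, and $T$ is the sought linear isometry.

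To treat the general case $F(0)=p\in B_Y$, I would seek an automorphism $\sigma\in\mathrm{Aut}(B_Y)$ with $\sigma(p)=0$ and then apply the base case to $\sigma\circ F$. The existence of such $\sigma$ amounts to showing $p$ lies in the $\mathrm{Aut}(B_Y)$-orbit of the origin. For this I would use the intrinsic description of that orbit as the open unit ball of the \emph{symmetric part} $S(Y)$, defined as the set of values at $0$ of complete holomorphic vector fields on $B_Y$. Since $F$ pushes forward complete holomorphic vector fields on $B_X$ to complete holomorphic vector fields on $B_Y$, the conjugation $\gamma\mapsto F\gamma F^{-1}$ identifies the $\mathrm{Aut}(B_X)$-orbit of $0$ with the $\mathrm{Aut}(B_Y)$-orbit of $p$. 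An intrinsic characterisation of the origin's orbit --- for instance that its isotropy subgroup is precisely the linear-isometric stabiliser of a distinguished point, giving a maximal compact-like isotropy ---  then forces this common orbit to be $B_{S(Y)}$, so $p\in B_{S(Y)}$ and $\sigma$ can be produced by exponentiating the complete vector field that joins $p$ to $0$.

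The principal obstacle is precisely this reduction: one must extract from the purely biholomorphic equivalence of $B_X$ and $B_Y$ an intrinsic identification of the symmetric-part orbit, which rests on the Banach Lie algebra structure of $\mathrm{aut}(B_X)$ and on the fact that every complete holomorphic vector field decomposes as $v-L(x)-Q_v(x,x)$ with $v\in S(X)$, $L$ a skew-Hermitian linear map, and $Q_v$ a canonical quadratic term. Once this structural machinery is admitted the base case is essentially automatic, and concatenating the two steps yields the required linear isometry $X\to Y$.
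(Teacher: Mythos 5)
The paper offers no proof of this statement: it is quoted from \cite{KaUp77}, with \cite{Arazy87} indicated for a reviewed proof, so your attempt can only be measured against that classical argument, whose overall architecture you do follow. Your base case is correct and essentially complete modulo standard citations: the Cauchy-estimate form of the Schwarz lemma gives $\|F'(0)\|\leq 1$ and $\|F'(0)^{-1}\|\leq 1$, so $T=F'(0)$ is a surjective linear isometry, and Cartan's uniqueness theorem (valid for bounded domains in Banach spaces) forces $T^{-1}\circ F=\mathrm{id}$.

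The genuine gap is in the reduction to the base case, that is, in showing that $p=F(0)$ lies in the ${\rm Aut}(B_Y)$-orbit of $0$. Conjugation by $F$ does identify the orbit of $0\in B_X$ with the orbit of $p\in B_Y$, but this only shows that the orbit of $p$ is biholomorphic to the ball of $S(X)$; it does not make it the \emph{same} orbit as that of $0\in B_Y$. Orbits of a group action are equal or disjoint, and the criterion you propose --- that both orbits have isotropy subgroups conjugate to groups of linear isometries, hence ``maximal compact-like'' isotropy --- cannot decide between the two cases: distinct orbits of one and the same action routinely have conjugate stabilisers, and there is no compactness available in infinite dimensions to rigidify the situation. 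The classical resolution is different: $0\in B_Y$ admits the symmetry $-\mathrm{id}_Y$ because the ball is balanced, $p$ admits the symmetry $F\circ(-\mathrm{id}_X)\circ F^{-1}$, and one invokes the theorem of Braun, Kaup and Upmeier \cite{BraKaUp78} that the set of points of the ball admitting a symmetry coincides with $G(0)=B_Y\cap S(Y)$. The hard direction of that coincidence (a point with a symmetry lies in the orbit of the origin) is exactly the structural content your sketch omits; it does not follow formally from the decomposition of complete vector fields into $v+L(z)-Q_v(z,z)$, but requires the Banach--Lie analysis of ${\rm aut}(B_Y)$ carried out in \cite{KaUp77} and \cite{BraKaUp78}. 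Until that step is supplied, your argument proves the theorem only under the extra hypothesis $F(0)=0$.
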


We recall that, given a domain $U$ in a complex Banach space $X$ (i.e. an open, connected subset), a function $f$ from $U$ to another complex Banach space $F$ is said to be \emph{holomorphic} if the Fréchet derivative of $f$ exists at every point in $U$. When $f : U \to f(U)$ is holomorphic and bijective, $f (U)$ is open in $F$ and $f^{-l} : f (U) \to U$ is holomorphic, the mapping $f$ is said to be \emph{biholomorphic}, and the sets $U$ and $f (U)$ are \emph{biholomorphically
equivalent}. Theorem \ref{t Kaup Upmeier 77} gives an idea of the power of infinite-dimensional Holomorphy in Functional Analysis. A reviewed proof of Theorem \ref{t Kaup Upmeier 77} was published by J. Arazy in \cite{Arazy87}.\smallskip

A consequence of the results established by Kaup and Upmeier in \cite{KaUp77} gave raise to the study of the symmetric part of an arbitrary complex Banach space in the following sense: Let $X$ be a complex Banach space with open unit ball denoted by $D$. Let $G = {\rm Aut} (D)$ denote the group of all biholomorphic automorphisms of $D$ and let $G^O$ stand for the connected component of the identity in $G$. Given a holomorphic function $h : D \to X$, we can define a holomorphic vector field $Z= h (z) \frac{\partial}{\partial z}$, which is a composition differential operator on the space $H (D ,X)$ of all holomorphic functions from $D$ to $X,$ given by $X(f)(z) = (h(z) \frac{\partial}{\partial z}) f (z) =f'(z) (h(z))$, ($z\in D$). It is known that, for each $z_0$ the initial value problem $\frac{\partial}{\partial t} \varphi (t, z_0) = h(\varphi (t, z_0))$, $\varphi(0; z_0) = z_0$ has a unique solution $\varphi (t, z_0) : J_{z_0} \to D$
defined on a maximal open interval $J_{z_0}\subseteq \mathbb{R}$ containing $0$. The holomorphic mapping $h$ is called \emph{complete} when $J_{z_0}= \mathbb{R}$, for every $z_0\in D$. Denoting by ${\rm aut} (D)$ the Lie algebra of all complete, holomorphic vector fields on $D$, the \emph{symmetric part of $D$} is
$D_S = G(O) = G^O(O)$. The \emph{symmetric part of $X$}, denoted by $X_S$ or by $S(X)$, is the orbit of $0$ under the set ${\rm aut} (D)$ of all complete holomorphic vector fields on $D$. Furthermore, $X_S$ is a closed, complex subspace of $X$, $D_S = X_S \cap D$, and hence, $D_S$ is the open unit ball of $X_S$, $D_S$ is \emph{symmetric} in the sense that for each $z\in D_S$ there exists a symmetry of $D$ at $z$, i.e., a mapping $s_z \in {\rm Aut} (D_S)$ such that
$s_z (z)=z$, $s_z^2=identity$, and $s_z' (z)=-Id_E$; thus $D_S =E_S \cap D$ is a \emph{bounded symmetric domain} (cf. \cite{KaUp77}, \cite{BraKaUp78}, and \cite{Arazy87}).\smallskip

A Jordan structure associated with the symmetric part of every complex Banach space $X$ was also determined by W. Kaup and H. Upmeier in \cite{KaUp77}. Namely, for every $a \in X_S$ there is a unique symmetric continuous bilinear mapping $Q_a : X \times X \to X$ such that $(a - Q_a (z,z)) \frac{\partial}{\partial z}$ is a complete holomorphic vector field on $D$. A partial triple product is defined on $X\times X_S \times X$ by the assignment $$\{.,.,.\} : X\times X_S \times X \to X,$$ $$ \J xay := Q_a (x,y).$$ It is known (cf. \cite{KaUp77} and \cite{BraKaUp78}) that the partial triple product satisfies the following properties:
\begin{enumerate}[$(i)$]\label{axioms partial triple product} \item $\J ...$ is bilinear and symmetric in the outer variables and conjugate linear in the middle one;
\item $\J {X_S}{X_S}{X_S} \subseteq {X_S}$;
\item The Jordan identity $$\J ab{\J xyz} = \J {\J abx}yz - \J x{\J bay}z + \J xy{\J abz},$$ holds for every $a,b,y\in X_S$ and $x,z\in X;$
\item For each $a\in X_S$, the mapping $L(a,a): X\to X$, $z\mapsto \J aaz$ is a hermitian operator;
\item The identity $\J {\J xax}bx =  \J xa{\J xbx}$ holds for every $a,b\in X_S$ and $x\in X$.
\end{enumerate} It should be remarked here that property $(v)$ appears only implicitly in \cite{BraKaUp78}. A complete substantiation is included in \cite{Panou} (compare also \cite{Sta91}).\smallskip

The extreme possibilities for the symmetric part $X_S$ (i.e. $X_S =X$ or $X_S =\{0\}$) define particular and significant classes of complex Banach spaces. The deeply studied class of \emph{JB$^*$-triples}, introduced by W. Kaup in \cite{Ka83}, is exactly the class of those complex Banach spaces $X$ for which $X_S =X$. In the opposite side, we find the complex Banach spaces satisfying the \emph{linear biholomorphic property} (LBP, for short). A complex Banach space $X$ with open unit ball $D$ satisfies the LBP when its symmetric part is trivial (cf. \cite[page 145]{Arazy87}).\smallskip

The symmetric part of some classical Banach spaces was studied and determined by R. Braun, W. Kaup and H. Upmeier \cite{BraKaUp78}, L.L. Stachó \cite{Sta79}, J. Arazy \cite{Arazy87}, and J. Arazy and B. Solel \cite{ArazySolel90}. The following list covers the known cases:\begin{enumerate}[$(i)$]\item For $X = L_p(\Omega, \mu)$, $1 \leq p < 1$, $p \neq 2$, and dim$(X)\geq 2$, we have $X_S = 0$;
\item For $X =  H_p$ the classical Hardy spaces with $1 \leq p < 1$, $p \neq 2$, we have $X_S = 0$;
\item For $X = H_{\infty}$ or the disk algebra, $X_S = \mathbb{C}$;
\item When $X$ is a uniform algebra $A \subseteq C(K)$, $A_S = A \cap \overline{A}$.
\item When $A$ is a subalgebra of $B(H)$ containing the identity operator $I$, then $A_S$ is the maximal C$^*$-subalgebra $A \cap A^*$ of $A$;
\item Let $X$ be a complex Banach space with a 1-unconditional basis. Then $X= X_S$ if and only if $X$ is the $c_0$-sum of a sequence of Hilbert spaces. Moreover, if $X$ is a symmetric sequence space (i.e. the unit vector basis form a 1-symmetric basis of E) then either $X_S=\{0\}$ or $X_S=X$. In the last case, either $X=\ell_1$ or $X = c_0$.
\end{enumerate}

In a very recent contribution, M. Neal and B. Russo stated the following problem:

\begin{problem}\cite[Problem 2]{NealRu12}\label{problem NealRusso} Is the symmetric part of the predual of a von Neumann algebra equal
to 0? What about the predual of a JBW$^*$-triple which does not contain a Hilbert space as a direct summand?
\end{problem}

In this note we give a complete answer to the questions posed by Neal and Russo \cite{NealRu12} in the above problem. Our main result proves that for every JBW$^*$-triple $W$ which is not a Hilbert space, the symmetric part of its predual reduces to zero. In particular the symmetric part of the predual of an infinite-dimensional von Neumann algebra is equal to $\{0\}$. Unfortunately, there exist examples of JBW$^*$-triples $W$ containing a Hilbert space as a direct summand for which $S(W_*)= (W_*)_S = \{0\}$.

\section{Computing the symmetric part of a JBW$^*$-triple predual}

We recall that a JB$^*$-triple is a complex Banach space $E$ satisfying that $E_S =E$. JB$^*$-triples were introduced by W. Kaup in \cite{Ka83}, where he also gave the following axiomatic definition of these spaces: A \emph{JB$^*$-triple} is a complex Banach space $E$ equipped with a
triple product $\{\cdot,\cdot,\cdot\}:E\times E\times E\rightarrow
E$ which is linear and symmetric in the outer variables, conjugate
linear in the middle one, satisfies the axioms $(iii)$ and $(iv)$ in \eqref{axioms partial triple product} and
the following condition:
\begin{enumerate}[$(vi)$]
\item $\|\{x,x,x\}\|=\|x\|^3$ for all $x\in E$.
\end{enumerate}

Every C$^*$-algebra is a complex JB$^*$-triple with respect to the
triple product \linebreak $\J xyz = \frac{1}{2} ( x y^* z + z y^*
x)$, and in the same way every JB$^*$-algebra with respect to $\J
abc = \left( a\circ b^{*}\right) \circ c+\left( c\circ
b^{*}\right) \circ a-\left( a\circ c\right) \circ b^{*}$.\smallskip

Elements $a,b$ in a JB$^*$-triple $E$ are said to be \emph{orthogonal} (denoted by $a\perp b$) whenever $L(a,b) =0$. It is known that $a\perp b$ $\Leftrightarrow \J aab=0$ $\Leftrightarrow \J bba=0$ (cf. \cite[Lemma 1]{BurFerGarMarPe}). The rank, $r(E)$, of a real or complex JB$^*$-triple $E$, is the minimal cardinal number $r$ satisfying $card(S)\leq r$ whenever $S$ is an orthogonal subset of $E$, i.e. $0\notin S$ and $x\perp y$ for every $x\neq y$ in $S$.\smallskip

We briefly recall that an element $e$ in a JB$^*$-triple $E$ is said to be a \emph{tripotent} whenever $\J eee =e$. A tripotent $e\in E$ is said to be complete whenever $a\perp e$ implies $a=0$. When the condition $\{e,e,a\} = a$ implies that $a\in \mathbb{C} e$, we shall say that $e$ is a minimal tripotent. The symbol Tri$(E)$ will stand for the set of all tripotents in $E$.\smallskip

The following characterization of complete holomorphic vector fields, which is originally due to L.L. Stachó (see \cite{Sta79}, \cite{Sta82} and \cite{Up87}), has been borrowed from \cite[Proposition 2.5]{ArazySolel90}.

\begin{proposition}\label{p Stacho} Let $X$ be a complex Banach space whose open unit ball is denoted by $D$ and let $h: D \to  X$ be a holomorphic mapping. Then $h \in {\rm aut}(D)$ if and only if $h$ extends holomorphically to a neighborhood of $\overline{D}$, and, for every $z\in X$, $\varphi\in X^*$ satisfying $\|z\| =\| \varphi\| =1 = \varphi (z)$, we have $\Re{\rm e} \varphi(h(z)) = 0$.$\hfill\Box$
\end{proposition}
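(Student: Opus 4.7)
The plan is to prove both implications by working with the local flow of the ODE $\dot z=h(z)$ and exploiting that norming functionals detect the geometry of the boundary of $D$.

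\medskip
For the necessity direction, suppose $h\in \mathrm{aut}(D)$ and let $\varphi_t\colon D\to D$ denote the one-parameter group of biholomorphic automorphisms it generates, so $\frac{\partial}{\partial t}\varphi_t(z)|_{t=0}=h(z)$. First I would invoke the standard fact (going back to Kaup) that each $\varphi_t$ extends to a biholomorphic self-map of a fixed open neighborhood $U\supseteq\overline{D}$; differentiating the group law in $t$ at $0$ then yields a holomorphic extension of $h$ to $U$. Next, fix $z\in X$ and $\varphi\in X^{*}$ with $\|z\|=\|\varphi\|=1=\varphi(z)$. Since $\varphi_t$ maps $\overline D$ into itself for every $t\in\mathbb R$, we have
\[
\Re\varphi\bigl(\varphi_t(z)\bigr)\le\|\varphi_t(z)\|\le 1 = \varphi(z),
\]
so the real-valued function $t\mapsto\Re\varphi(\varphi_t(z))$ attains a maximum at $t=0$. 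Differentiating gives $\Re\varphi(h(z))=0$, which is the asserted boundary condition.

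\medskip
For the sufficiency direction, assume $h$ extends holomorphically to a neighborhood $U$ of $\overline D$ and satisfies the $\Re\varphi(h(z))=0$ condition on the unit sphere. Because a holomorphic map is locally Lipschitz, the Picard–Lindel\"of theorem provides, for every $z_0\in U$, a unique maximal solution $\varphi(\cdot,z_0)\colon J_{z_0}\to U$ of $\dot z=h(z)$ with $\varphi(0,z_0)=z_0$. The heart of the argument is a tangency-type lemma: for $z_0$ with $\|z_0\|=1$, the trajectory stays on the unit sphere. To prove this, assume $f(t):=\|\varphi(t,z_0)\|$ exceeds $1$ for some $t>0$; choose at each boundary crossing a norming functional $\phi$ of $\varphi(t,z_0)$ and apply Hahn–Banach together with the first-order expansion
\[
\varphi(t+s,z_0)=\varphi(t,z_0)+s\,h(\varphi(t,z_0))+o(s),
\]
to conclude that the right Dini derivative of $\Re\phi\circ\varphi(\cdot,z_0)$ at $t$ equals $\Re\phi\bigl(h(\varphi(t,z_0))\bigr)=0$ whenever $\varphi(t,z_0)\in\partial D$. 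A Gronwall-style comparison then shows $f$ cannot strictly exceed $1$ either forwards or backwards in time. Applied symmetrically, this proves $\partial D$ is invariant under both the forward and backward flows. Combined with uniqueness of solutions, this forces $\varphi(t,z_0)\in D$ for every $z_0\in D$ and every $t\in J_{z_0}$: otherwise the time-reversed trajectory starting at a boundary point would have to both remain on $\partial D$ and reach the interior point $z_0$.

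\medskip
It remains to show $J_{z_0}=\mathbb R$ and that each time-$t$ map is a biholomorphic automorphism of $D$. Since $h$ is bounded on $\overline D$ and solutions stay in $\overline D$, the solutions cannot blow up; the maximal interval must therefore be all of $\mathbb R$. The group law $\varphi(t+s,\cdot)=\varphi(t,\varphi(s,\cdot))$ and the inverse $\varphi(-t,\cdot)$ exhibit each $\varphi(t,\cdot)$ as a biholomorphic self-map of $D$, so $h\in\mathrm{aut}(D)$.

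\medskip
The main obstacle is the boundary-invariance/tangency step in the sufficiency direction: in a general Banach space the norm is not smooth, so one cannot simply differentiate $\|\varphi(t,z_0)\|^{2}$. The fix is to replace the norm by its one-sided Dini derivative along the trajectory, bound that derivative using an optimally chosen supporting functional at $\varphi(t,z_0)$, and then apply a standard differential-inequality argument. A secondary technical point is the holomorphic extendability of $h$ across $\partial D$ in the necessity direction, which relies on Kaup's structural description of one-parameter groups of biholomorphisms of bounded domains.
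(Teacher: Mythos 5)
The paper offers no proof of this proposition: it is quoted, with a $\Box$, from \cite[Proposition 2.5]{ArazySolel90} and Stach\'o's papers, so your attempt can only be measured against the standard flow/tangency argument from that literature --- which is exactly the strategy you outline. Your necessity direction is correct modulo the fact you cite: the interior-maximum differentiation of $t\mapsto \Re{\rm e}\,\varphi(\varphi_t(z))$ at $t=0$ is fine, and the holomorphic extendability of $h$ across $\partial D$ is indeed the deep half of that implication (it ultimately rests on the structure theorem that complete vector fields on a bounded circular domain are polynomials of degree at most $2$), so invoking it is legitimate even if it hides most of the work.

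The genuine gap is in the sufficiency direction, at the very step you flag as the main obstacle. As written, your tangency lemma only shows that the one-sided derivatives of $f(t)=\|\varphi(t,z_0)\|$ vanish \emph{at those times $t$ where $f(t)=1$}, and no Gronwall comparison follows from that alone: a smooth function can have zero derivative everywhere on the level set $\{f=1\}$ and still cross it (e.g.\ $f(t)=1+t^{3}$). What Gronwall requires is a differential inequality valid on a whole interval, say $D^{+}f(t)\le L\,\bigl|f(t)-1\bigr|$ for $f(t)$ near $1$. This is the missing estimate, and it comes from observing that any norm-one support functional $\phi$ of $z=\varphi(t,z_0)\neq 0$ also supports $z/\|z\|$, so the hypothesis gives $\Re{\rm e}\,\phi\bigl(h(z/\|z\|)\bigr)=0$ and hence $\bigl|\Re{\rm e}\,\phi(h(z))\bigr|\le \bigl\|h(z)-h(z/\|z\|)\bigr\|\le L\,\bigl|\,\|z\|-1\,\bigr|$, where $L$ is a Lipschitz constant for $h$ near the unit sphere; since $D^{+}f(t)$ is the supremum of $\Re{\rm e}\,\phi(h(z))$ over such $\phi$, the required inequality follows in both time directions and the rest of your argument (invariance of $D$, no finite-time blow-up, $J_{z_0}=\mathbb{R}$) goes through. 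A secondary point you should make explicit: both this constant $L$ and your completeness step assume $h$ is bounded, hence uniformly Lipschitz, on a neighbourhood of $\overline{D}$; in infinite dimensions this is not a consequence of holomorphic extendability alone and must be extracted from (or added to) the hypotheses.
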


In order to simplify the arguments, we recall some geometric notions. Elements $s,y$ in a complex Banach space $X$ are said to be \emph{$L$-orthogonal}, denoted by $x\perp_{L} y$, (respectively, \emph{$M$-orthogonal}, denoted by $x\perp_{M} y$) if $\| x\pm y\| = \|x\|+\|y\|$ (respectively, $\| x\pm y\| = \max \{\|x\|,\|y\|\}$). It is known that $x\perp_L y $ if, and only if, for all real numbers $s,t$, $s x\perp_L ty$ if, and only if, there exist elements $a,b\in X^*$ satisfying $a\perp_M b$, $\| x\| \ \| a\| = \|x\| = a(x)$, and $\| y\| \ \| b\| = \|y\| = b(y)$ (see, for example, \cite[Lemma 3.1 and Corollary 4.3]{EdRu01}). It is also known that for each pair of elements $(a,b)$ in a JB$^*$-triple $E$, the condition $a\perp b$ implies $a\perp_M b$ (cf. \cite[Lemma 1]{BurFerGarMarPe} and \cite[Lemma 1.3(a)]{FriRu85}).\smallskip

We also recall that a JBW$^*$-triple is a JB$^*$-triple which is also a dual Banach space. In this sense, JBW$^*$-triples play an analogue role to that given to von Neumann algebras in the setting of C$^*$-algebras. Every JBW$^*$-triple admits a unique (isometric) predual and its product is separately weak$^*$-continuous (see \cite{BarTi}).\smallskip

We can proceed with a first technical result on the structure of the symmetric part of a JBW$^*$-triple predual.

\begin{proposition}\label{p orthoognal tripotents annihilate elements in the symmetric part} Let $W$ be a JBW$^*$-triple with predual $W_*=F$.
Suppose, $e_1,e_2$ are two tripotent elements in $W$, $\varphi_1,\varphi_2\in F$ with $\Vert \varphi_k\Vert =1$, $e_1\perp e_2$, and $e_j(\varphi_k)=\delta_{jk}$ $(j,k=1,2)$. Then $e_1 (\phi)= e_2 (\phi)=0$, for every $\phi$ in $F_S$.
\end{proposition}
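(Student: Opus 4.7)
The plan is to invoke Stach\'o's characterization (Proposition \ref{p Stacho}) of complete holomorphic vector fields: since $\phi\in F_S$, the field $(\phi - Q_\phi(z,z))\frac{\partial}{\partial z}$ lies in $\mathrm{aut}(D)$, so for every $z\in F$ and $\psi\in F^{*}=W$ with $\|z\|=\|\psi\|=\psi(z)=1$ one has $\mathrm{Re}\,\psi\bigl(\phi - Q_\phi(z,z)\bigr)=0$. The strategy is to feed into this identity a family of test pairs built out of $e_1,e_2,\varphi_1,\varphi_2$ that carries two free parameters, and then to decouple the resulting real-part condition.

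Concretely, for a unimodular scalar $\eta$ and a parameter $\zeta$ in the closed unit disk, I would take
\[
z := \bar\eta\,\varphi_1 \in F, \qquad \psi := \eta\,e_1 + \zeta\,e_2 \in W.
\]
Then $\|z\|=1$. Since the orthogonal tripotents $e_1\perp e_2$ generate an abelian JB$^{*}$-subtriple isometric to $\ell_\infty^{2}$, one has $\|\psi\|=\max(|\eta|,|\zeta|)=1$; and $e_j(\varphi_k)=\delta_{jk}$ gives $\psi(z)=\bar\eta\,\eta=1$. Using linearity of $\psi$, the bilinearity and symmetry of $Q_\phi$, and writing $A_j:=e_j(\phi)$ and $B_j:=e_j(Q_\phi(\varphi_1,\varphi_1))$, Stach\'o's identity becomes
\[
\mathrm{Re}\bigl[\eta A_1-\bar\eta B_1\bigr] + \mathrm{Re}\bigl[\zeta\,(A_2-\bar\eta^{2} B_2)\bigr] = 0,
\]
valid for every $|\eta|=1$ and every $|\zeta|\le 1$.

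The key observation is that, with $\eta$ fixed, the second summand is $\mathrm{Re}(\zeta\,C_\eta)$ with $C_\eta := A_2-\bar\eta^{2} B_2$, an $\mathbb{R}$-linear functional of $\zeta$ on the closed unit disk; it can be constant only if $C_\eta=0$. Therefore $A_2=\bar\eta^{2} B_2$ for every unimodular $\eta$, which (comparing $\eta=1$ and $\eta=i$) forces both $A_2=0$ and $B_2=0$; in particular $e_2(\phi)=0$. Swapping the roles of the indices (take $z=\bar\eta\,\varphi_2$ and $\psi=\zeta e_1+\eta e_2$) yields $e_1(\phi)=0$ by the same argument. The only delicate step is the norm identity $\|\eta e_1+\zeta e_2\|=\max(|\eta|,|\zeta|)$, which I expect to justify in one line from the classical description of the JB$^{*}$-subtriple spanned by two orthogonal tripotents; everything else is a clean two-parameter decoupling of Stach\'o's real-part identity.
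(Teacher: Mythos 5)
Your argument is correct and is essentially the paper's own proof: both feed supporting pairs built from $e_1,e_2,\varphi_1,\varphi_2$ into Stach\'o's criterion (Proposition \ref{p Stacho}) for the field $\big(\phi-Q_\phi(z,z)\big)\frac{\partial}{\partial z}$, use $e_1\perp e_2\Rightarrow e_1\perp_M e_2$ to control the norm of the combined functional, and decouple the resulting real-part identity by rotating the free parameters. The only cosmetic difference is that you fix the point $\bar\eta\varphi_1$ and vary the supporting functional over the disk parameter $\zeta$, whereas the paper varies the point $\kappa_1(1-\lambda)\varphi_1+\kappa_2\lambda\varphi_2$ and then specializes $\lambda=1$; the decoupling step is the same.
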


\begin{proof} Let $\phi$ be an element in $F_S$. 
Since $\phi\in F_S$, the holomorphic vector field $\big[ \phi - Q_\phi (z,z) \big] \frac{\partial}{\partial z}$ is tangent to the unit sphere of $F$. Thus, by Proposition \ref{p Stacho},
$$\Re{\rm e} \big\langle e,  \phi - Q_\phi (\varphi,\varphi) \big\rangle =0,$$ for every $\varphi\in F$, $e\in W$ with $\Vert \varphi \Vert = \Vert e\Vert =1 =
\big\langle e,\varphi \big\rangle \big(=e(\varphi)\big).$\smallskip

Since $e_1\perp e_2$ implies $e_1\perp_M e_2$, it follows from the hypothesis that $\varphi_1 \perp_L \varphi_2$. In particular, for any weight $0\leq \lambda\leq 1$ and $\kappa_1,\kappa_2\in\mathbb{T} :=\{ \kappa\in\CC:\ \vert \kappa\vert=1\}$, $\kappa_1 (1-\lambda) \varphi_1 + \kappa_2\lambda \varphi_2$ belongs to the unit sphere of $F$ and $\overline{\kappa_1} e_1 + \overline{\kappa_2} e_2$ is a supporting functional for it. Therefore,
$$0 = \Re{\rm e} \Big\langle \overline{\kappa_1} e_1 + \overline{\kappa_2} e_2 , \phi - Q_\phi\big(
\kappa_1 (1-\lambda) \varphi_1 + \kappa_2\lambda \varphi_2,\kappa_1 (1-\lambda) \varphi_1 + \kappa_2\lambda \varphi_2\big)
\Big\rangle $$
$$= \Re{\rm e} \Big( \overline{\kappa_1} e_1(\phi) + \overline{\kappa_2} e_2(\phi) +
\kappa_1(1-\lambda)^2 \alpha_1 + \kappa_2\lambda^2 \alpha_2 +
\kappa_1\lambda(1-\lambda)\beta_1 + \kappa_2\lambda(1-\lambda)\beta_2\Big) $$
with the constants $\alpha_k:= \big\langle e_k, Q_\phi(\varphi_k,\varphi_k) \big\rangle$,
$\beta_k:= 2 \big\langle e_{3-k} , Q_\phi(\varphi_k,\varphi_{3-k})\big\rangle $. 
In particular, with the choice $\lambda=1$ we get
$$\Re{\rm e}\big( \overline{\kappa_1} e_1(\phi) + \overline{\kappa_2} e_2(\phi) + \kappa_2 \alpha_2 \big) =0$$ for every $\kappa_1,\kappa_2\in\TT$. Replacing $\kappa_2$ with $-\kappa_2$ we have $\Re {\rm e} \big( \kappa_1 e_1 (\phi) \big)=0 \quad (\kappa_1\in\TT),$ and hence $e_1(\phi)=0$. 
\end{proof}

Before dealing with our main result we shall review some results on JB$^*$-triples of rank one. For a JB$^*$-triple $E$,
the following are equivalent:
\begin{enumerate}[$(a)$] \item $E$ has rank one;
\item $E$ is a complex Hilbert space equipped with the triple product given by $2 \{ a,b,c\} :=  (a|b) c +(c|b) a$, where $(.|.)$ denotes the inner product of $E$;
\item The set of complete tripotents in $E$ is non-zero and every complete tripotent in $E$ is minimal;
\item $E$ contains a complete tripotent which is minimal.
\end{enumerate}

The equivalence $(a)\Leftrightarrow (b)$ follows, for example, from \cite[Proposition 4.5]{BuChu}. The implications $(b)\Rightarrow (c)$ and $(c)\Rightarrow (d)$ are clear. It should be commented here that a general JB$^*$-triple might not contain any tripotent. However, since the complete tripotents of a JB$^*$-triple $E$ coincide with the real and complex extreme points of its closed unit ball (cf. \cite[Proposition 3.5]{KaUp77b} and \cite[Lemma 4.1]{BraKaUp78b}), by the Krein-Milman theorem, every JBW$^*$-triple contains an abundant set of (complete) tripotents. In the setting of JBW$^*$-triples, a tripotent $e$ is minimal if and only if it cannot be written as an orthogonal sum of two (non-zero) tripotents (compare the arguments in \cite[Proposition 2.2]{PeSta}). Back to the equivalences, the implication $(d)\Rightarrow (a)$ is established in \cite[Proposition 3.7 and its proof]{BurGarPe11}.

\begin{theorem}\label{t symmetric predual} Let $W$ be a JBW$^*$-triple of rank $>1$ and let $F$ denote its predual. Then $F_S = \{0\}$, that is, $F$  satisfies the linear biholomorphic property.
\end{theorem}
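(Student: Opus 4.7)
My plan is to show that $\phi(e) = 0$ for every complete tripotent $e \in W$ and every $\phi \in F_S$; since the closed unit ball of $W$ is weak$^*$-compact and, by Krein-Milman together with the extreme-point characterization recalled in the excerpt, coincides with the weak$^*$-closed convex hull of its complete tripotents, the weak$^*$-continuity of $\phi$ (as an element of the predual $F$) will then force $\phi = 0$.

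Fix a complete tripotent $e \in W$. The contrapositive of the implication $(d) \Rightarrow (a)$ in the rank-one characterization, applied to the assumption $r(W) > 1$, shows that $e$ cannot be minimal; by the JBW$^*$-triple criterion recalled just before the theorem ($e$ minimal iff $e$ admits no splitting as an orthogonal sum of two nonzero tripotents), $e$ therefore decomposes as $e = f_1 + f_2$ for some nonzero, mutually orthogonal tripotents $f_1, f_2 \in W$. The Peirce $2$-space $W_2(e)$ is a unital JBW$^*$-algebra with unit $e$ in which $f_1, f_2$ are orthogonal projections summing to the unit; pick a normal state $\psi_1$ of $W_2(e)$ with $\psi_1(f_1) = 1$ (hence $\psi_1(f_2) = 0$) and set $\varphi_1 := \psi_1 \circ P_2(e)$, where $P_2(e)$ is the norm-one, weak$^*$-continuous Peirce $2$-projection associated with $e$. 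Then $\varphi_1 \in F$, $\Vert \varphi_1\Vert = \varphi_1(f_1) = 1$, and $\varphi_1(f_2) = 0$; symmetrically construct $\varphi_2 \in F$ with $\varphi_2(f_2) = 1$ and $\varphi_2(f_1) = 0$. Proposition \ref{p orthoognal tripotents annihilate elements in the symmetric part} applied to the quadruple $(f_1, f_2;\, \varphi_1, \varphi_2)$ then yields $\phi(f_1) = \phi(f_2) = 0$, so $\phi(e) = \phi(f_1) + \phi(f_2) = 0$, completing the reduction outlined in the first paragraph.

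The main obstacle I foresee is the clean production of the predual pair $(\varphi_1,\varphi_2)$ with exactly the normalization and cross-vanishing relations required by Proposition \ref{p orthoognal tripotents annihilate elements in the symmetric part}; once one is willing to invoke the Peirce $2$-projection and the existence of normal states of a JBW$^*$-algebra supported at any prescribed projection, everything else---splitting a complete tripotent via the failure of minimality in rank $>1$, and reducing to extreme points via Krein-Milman plus the weak$^*$-continuity of $\phi$---is an essentially routine assembly of material already gathered in the excerpt.
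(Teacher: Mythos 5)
Your argument is correct and follows the same overall strategy as the paper's proof: reduce via Krein--Milman and weak$^*$-continuity to showing that $\phi$ annihilates every complete tripotent, split a complete tripotent $e=f_1+f_2$ into two nonzero orthogonal tripotents using the failure of minimality when the rank exceeds one, and feed the pieces into Proposition \ref{p orthoognal tripotents annihilate elements in the symmetric part}. The single point where you genuinely diverge is the production of the supporting functionals $\varphi_1,\varphi_2$. The paper defines the obvious norm-one functionals $\psi_1,\psi_2$ on the two-dimensional subtriple $\mathbb{C} e_1 \bigoplus^{\infty} \mathbb{C} e_2$ generated by the orthogonal tripotents and then invokes Bunce's norm-preserving extension theorem \cite{Bun01} to lift them to weak$^*$-continuous functionals on all of $W$; you instead build them directly as $\varphi_j=\psi_j\circ P_2(e)$ with $\psi_j$ a normal state of the unital JBW$^*$-algebra $W_2(e)$ taking the value $1$ at the projection $f_j$. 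Your construction is sound: $f_1,f_2$ are indeed orthogonal projections in $W_2(e)$ summing to the unit $e$, the Peirce $2$-projection is contractive and weak$^*$-continuous, and $\psi_j(f_j)=1$ forces $\psi_j(f_{3-j})=0$, so the quadruple satisfies exactly the hypotheses of Proposition \ref{p orthoognal tripotents annihilate elements in the symmetric part}. What each route buys: the paper's version keeps the argument free of Peirce machinery at the cost of citing a nontrivial extension theorem, while yours replaces that citation with standard (but not explicitly developed in this paper) facts about Peirce decompositions and normal states of JBW$^*$-algebras; either way the functional-analytic content is the same, namely that orthogonal tripotents admit weak$^*$-continuous biorthogonal norm-one functionals.
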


\begin{proof} Let $\phi$ be an element in $F_S$. According to the Krein-Milman Theorem, the finite linear combinations of the extreme points of
the closed unit ball, $D(W)$, of $W$ form a weak$^*$-dense subset in $D(W)$. Therefore, it suffices to prove that \begin{equation}\label{eq 1 thm}
e(\phi)=0 \quad \hbox{for all} \ \ e\in{\rm Ext}\big( \overline{D}(W)\big),
\end{equation} or equivalently, $e(\phi)=0$ for every complete tripotent $e\in W$.

Let $e$ be a complete tripotent in $W$. Since $W$ has rank $>1$, the comments preceding this theorem guarantee the existence of two non-zero tripotents $e_1, e_2$ in $W$ such that $e_1\perp e_2$ and $e =  e_1+ e_2$. Let us notice that the JBW$^*$-subtriple $U$ of $W$ generated by $e_1 $ and $e_2$ coincides with $\mathbb{C} e_1 \bigoplus^{\infty} \mathbb{C} e_2.$ We can easily define two norm-one functionals $\psi_1,\psi_2$ in $U_*$ satisfying $\psi_j (e_k)=\delta_{jk}$. By \cite[Theorem]{Bun01}, there exists norm-one weak$^*$-continuous functionals $\varphi_1, \varphi_2$ in $W_*$ which are norm-preserving extensions of $\psi_1$ and $\psi_2$, respectively. Applying Proposition \ref{p orthoognal tripotents annihilate elements in the symmetric part} we have $e_j (\phi)=0$, for every $j=1,2$, and finally $e(\phi) = e_1 (\phi) + e_2 (\phi)=0$ as we desired.\end{proof}

It is known that a von Neumann algebra, regarded as a JBW$^*$-triple, has rank one if and only if it coincides with $\mathbb{C}$. We therefore have:

\begin{corollary}\label{c von Neumann preduals} Let $W$ be a von Neumann algebra of dimension $<1$ and let $F=W_*$. Then $F_S = \{0\}$, that is, $F$ satisfies the linear biholomorphic property.$\hfill\Box$
\end{corollary}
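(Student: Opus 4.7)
The corollary is an almost immediate consequence of Theorem \ref{t symmetric predual}: the plan is to apply that theorem to $W$, viewed as a JBW$^*$-triple with the standard product $\{x,y,z\}=\frac12(xy^*z+zy^*x)$, so the whole task reduces to verifying that a von Neumann algebra of dimension $>1$ has rank $>1$ as a JBW$^*$-triple. This is precisely the content of the sentence preceding the corollary, so the only work is to justify that assertion.

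To establish the rank inequality, I would argue as follows. Assume $W\ne\CC$. Since $W$ is the linear span of its self-adjoint part and contains elements outside $\CC\cdot 1$, it contains a self-adjoint element $a\notin\RR\cdot 1$. The spectral resolution of $a$ in $W$ (using that $W$ is a von Neumann algebra, hence closed under the Borel functional calculus of its self-adjoint elements) then produces a spectral projection $p\in W$ with $0\ne p\ne 1$. The elements $p$ and $1-p$ are non-zero projections satisfying $p(1-p)=0=(1-p)p$, and a direct computation shows that $L(p,1-p)=0$ in the triple product above; equivalently $p\perp (1-p)$ in the JB$^*$-triple sense. Hence $\{p,1-p\}$ is an orthogonal set of cardinality $2$ in $W$, so $r(W)\ge 2$.

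With $r(W)>1$ verified, Theorem \ref{t symmetric predual} applies directly to conclude that $F_S=\{0\}$, i.e.\ $F=W_*$ satisfies the linear biholomorphic property. The main (and only non-formal) obstacle is the production of the nontrivial projection $p$, which is standard von Neumann algebra theory and carries no subtlety beyond invoking the Borel functional calculus available in any von Neumann algebra; no additional machinery from the JB$^*$-triple side is needed beyond the already recorded equivalence between rank one and $W=\CC$.
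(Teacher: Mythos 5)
Your proposal is correct and follows exactly the paper's route: the corollary is deduced from Theorem \ref{t symmetric predual} together with the fact (stated without proof in the paper, just before the corollary) that a von Neumann algebra has rank one as a JBW$^*$-triple precisely when it equals $\CC$. The only difference is that you supply the standard spectral-projection argument for that fact, which the paper simply cites as known; note also that the ``dimension $<1$'' in the statement is a typo for ``dimension $>1$'', as your reading correctly assumes.
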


There is an additional aspect of Problem \ref{problem NealRusso} that should be commented. Suppose $H$ is a complex Hilbert space, $W$ is a non-zero JBW$^*$-triple, and consider the JBW$^*$-triple $U= H \bigoplus^{\infty} W$ (the orthogonal sum of $H$ and $W$). 
It is clear that $U$ has rank $>1$. Thus, Theorem \ref{t symmetric predual} implies that $S(U_*)=\{0\}.$ In other words, the predual of a JBW$^*$-triple which does not contain a Hilbert space as a direct summand satisfies the linear biholomorphic property but the class of all JBW$^*$-triples whose preduals satisfy the linear biholomorphic property is strictly bigger.

\bigskip

\end{document}